\newcommand{\Z}{\mathbb Z}
\newcommand{\C}{\mathcal C}
\newcommand{\Tr}{\mathrm{Tr}}
\newcommand{\F}{\mathbb{F}}
\newtheorem{theorem}{Theorem}[section]
\newtheorem{lemma}[theorem]{Lemma}
\theoremstyle{definition}
\newtheorem{definition}[theorem]{Definition}
\newtheorem{corollary}[theorem]{Corollary}
\theoremstyle{remark}
\numberwithin{equation}{section}
\begin{document}
\begin{frontmatter}
\title{Counting solutions of special linear equations over finite fields}


\author{Lucas Reis}
\address{Departamento de Matem\'{a}tica,
Universidade Federal de Minas Gerais,
UFMG,
Belo Horizonte MG (Brazil),
 30270-901}
\ead{lucasreismat@mat.ufmg.br}




\begin{abstract}
Let $q$ be a prime power, let $\F_q$ be the finite field with $q$ elements and let $d_1, \ldots, d_k$ be positive integers. In this note we explore the number of solutions $(z_1, \ldots, z_k)\in\overline{\F}_q^k$ of the equation
\begin{equation*}L_1(x_1)+\cdots+L_k(x_k)=b,\end{equation*}
with the restrictions $z_i\in \F_{q^{d_i}}$, where each $L_i(x)$ is a non zero polynomial of the form $\sum_{j=0}^{m_i}a_{ij}x^{q^j}\in \F_q[x]$ and $b\in \overline{\F}_q$. We characterize the elements $b$ for which the equation above has a solution and, in affirmative case, we determine the exact number of solutions. As an application of our main result, we obtain the cardinality of the sumset
$$\sum_{i=1}^k\F_{q^{d_i}}:=\{\alpha_1+\cdots+\alpha_k\,|\, \alpha_i\in \F_{q^{d_i}}\}.$$
Our approach also allows us to solve another interesting problem, regarding the existence and number of elements in $\F_{q^n}$ with prescribed traces over intermediate $\F_q$-extensions of $\F_{q^n}$. 
\end{abstract}

\begin{keyword}
finite fields; sumsets; linearized polynomials;
normal elements  
\MSC[2010]{12E20\sep 11T55\sep 11B13}
\end{keyword}

\end{frontmatter}

\section{Introduction}
Given an abelian group $(G, +)$, a major problem in additive combinatorics is to study sumsets $A_1+\cdots+A_k:=\{a_1+\cdots+a_k: a_i\in A_k,\; 1\le i\le s\}\subseteq G$, where each $A_i$ is a nonempty subset of $G$. Classical questions include the cardinality (or density if $G$ is infinite countable) of sumsets and a detailed description of their elements. Problems on sumsets relate to many areas such as Combinatorics, Number theory and Algebra. Traditional techniques include, but are not limited to, discrete Fourier analysis, exponential sums, Graph Theory and elementary Number Theory. See~\cite{tao} for a rich source of classical problems, techniques and results  in additive combinatorics and~\cite{charsum} for a survey on results in the finite field setting.  

In this paper we are interested in special sumsets over finite fields from linearized polynomials. More specifically, given the finite field $\F_{q}$ of $q$ elements and $d_1, \ldots, d_k$ positive integers, we study the sumset
$$L_1(\F_{q^{d_1}})+\cdots+L_k(\F_{q^{d_k}}):=\{L_1(a_1)+\cdots+L_k(a_k)\,|\, a_i\in \F_{q^{d_i}},\; 1\le i\le s\},$$
where each $L_i(x)$ is a non zero polynomial of the form 
$\sum_{j=0}^{m_i}a_{ij}x^{q^j}\in \F_q[x]$. 
We actually go a little bit further. If $\overline{\F}_q$ is the algebraic closure of $\F_q$ and $b\in \overline{\F}_q$, we study the number of solutions $(z_1, \ldots, z_k)\in \overline{\F}_q^k$ of the equation
\begin{equation}\label{eq:intro-main}L_1(x_1)+\cdots+L_k(x_k)=b,\end{equation}
with the restrictions $z_i\in \F_{q^{d_i}}$. Of course, the corresponding sumset comprise the elements $b\in \overline{\F}_q$ for which Eq.~\eqref{eq:intro-main} has a positive number of solutions.

The main result of this paper provides a criterion for when a generic $b\in \overline{\F}_q$ yields a solution of Eq.~\eqref{eq:intro-main} with the aforementioned restrictions and, in affirmative case, we determine the exact number of solutions. We observe that, if for a polynomial $g\in \F_q[x]$ with $g(x)=\sum_{i=0}^ma_ix^i$ we set $L_g(x)=\sum_{i=0}^ma_ix^{q^i}$, then each $L_i$ in Eq.~\eqref{eq:intro-main} is written uniquely as $L_{f_i}$ for some nonzero $f_i\in \F_q[x]$. With this observation, our main result can be stated as follows.

\begin{theorem}\label{thm:main}
Fix $f_1, \ldots, f_k\in \F_q[x]$ non zero polynomials, $d_1, \ldots, d_k$ positive integers and $b\in \overline{\F}_q$. Let $\ell$ be any positive integer that is divisible by the numbers $d_1, \ldots, d_k$, set $G_{\ell}(x)=\gcd\left(x^{\ell}-1, \frac{f_1(x)(x^{\ell}-1)}{x^{d_1}-1}, \ldots, \frac{f_k(x)(x^{\ell}-1)}{x^{d_k}-1}\right)$ and $H_{\ell}(x)=\frac{x^{\ell}-1}{G_{\ell}(x)}$. For a polynomial $f\in \F_q[x]$ with $f(x)=\sum_{i=0}^ma_ix^i$, write $L_f(x)=\sum_{i=0}^ma_ix^{q^i}$. Then the equation 
$$L_{f_1}(x_1)+\cdots+L_{f_k}(x_k)=b,$$
has a solution $(z_1, \ldots, z_k)\in \overline{\F}_q^k$ with the restrictions $z_i\in \F_{q^{d_i}}$ if and only if 
$$L_{H_{\ell}}(b)=0.$$
In this case, the number of such solutions equals
$q^{d_1+\cdots+d_k-\deg(H_\ell(x))}.$
\end{theorem}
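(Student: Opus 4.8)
The plan is to recast the whole problem in the language of the $\F_q[x]$-module structure on $\overline{\F}_q$, where $x$ acts as the Frobenius $\sigma:z\mapsto z^q$. Under this identification, $L_f(z)=f(\sigma)(z)$, the subfield $\F_{q^{d}}$ is exactly the kernel of $\sigma^{d}-1=L_{x^{d}-1}$, and more generally for any polynomial $h\mid x^{\ell}-1$ the set of $z\in\F_{q^\ell}$ annihilated by $L_h$ is an $\F_q$-subspace of dimension $\deg h$ (this is the standard theory of linearized polynomials, and $\gcd$'s of $q$-polynomials correspond to $\gcd$'s of their conventional counterparts). Since every $z_i$ ranges over $\F_{q^{d_i}}\subseteq\F_{q^\ell}$ and $b$ must then lie in $\F_{q^\ell}$ (otherwise there are no solutions, and one checks $L_{H_\ell}(b)\ne 0$ in that case because $x^\ell-1\mid$ the relevant annihilator), the entire question lives inside the finite-dimensional $\F_q$-vector space $V=\F_{q^\ell}$.

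First I would consider the $\F_q$-linear map
\[
\Phi:\ \F_{q^{d_1}}\times\cdots\times\F_{q^{d_k}}\longrightarrow V,\qquad
(z_1,\dots,z_k)\longmapsto L_{f_1}(z_1)+\cdots+L_{f_k}(z_k).
\]
The solution set of the equation for a given $b$ is a coset of $\ker\Phi$ if $b\in\operatorname{im}\Phi$ and is empty otherwise; hence the number of solutions is $0$ or $q^{\dim\ker\Phi}$, and by rank–nullity $\dim\ker\Phi=(d_1+\cdots+d_k)-\dim\operatorname{im}\Phi$. So the theorem reduces to two claims: (i) $\operatorname{im}\Phi$ is precisely the kernel of $L_{H_\ell}$ inside $V$, i.e. the $\F_q$-subspace of dimension $\deg H_\ell$ cut out by $H_\ell\mid x^\ell-1$; and (ii) consequently $\dim\operatorname{im}\Phi=\deg H_\ell$, which plugged into rank–nullity gives the stated count $q^{d_1+\cdots+d_k-\deg H_\ell}$. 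Claim (ii) is immediate from (i) together with the dimension formula for kernels of linearized polynomials dividing $x^\ell-1$, so the real content is (i).

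For (i) I would argue on the level of $\F_q[x]$-submodules of $V$, viewing $V=\F_q[x]/(x^\ell-1)$ via a normal basis of $\F_{q^\ell}/\F_q$ (a normal element generates $V$ as a cyclic module). Each summand $L_{f_i}(\F_{q^{d_i}})$ is the image under multiplication by $f_i(x)$ of the submodule $\{\,v : (x^{d_i}-1)v=0\,\}$, which in the cyclic picture is the ideal generated by $\frac{x^\ell-1}{x^{d_i}-1}$; hence $L_{f_i}(\F_{q^{d_i}})$ corresponds to the ideal generated by $\gcd\!\left(x^\ell-1,\ \frac{f_i(x)(x^\ell-1)}{x^{d_i}-1}\right)$. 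The total image $\operatorname{im}\Phi$ is the sum of these $k$ ideals, and the sum of the ideals $(g_i)$ in $\F_q[x]/(x^\ell-1)$ is the ideal generated by $\gcd(g_1,\dots,g_k,x^\ell-1)=G_\ell(x)$. The ideal generated by $G_\ell$ in $\F_q[x]/(x^\ell-1)$ has $\F_q$-dimension $\ell-\deg G_\ell=\deg H_\ell$ and, under the correspondence between submodules and kernels of linearized polynomials, equals exactly $\{\,b\in V : L_{H_\ell}(b)=0\,\}$ since $H_\ell=\frac{x^\ell-1}{G_\ell}$ is the complementary cofactor. This establishes (i), and with it both the solvability criterion $L_{H_\ell}(b)=0$ and the count.

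The main obstacle I anticipate is bookkeeping the dictionary between the three viewpoints — linearized ($q$-)polynomials acting on $\overline{\F}_q$, ordinary polynomials acting on $\F_{q^\ell}$ via Frobenius, and ideals of the quotient ring $\F_q[x]/(x^\ell-1)$ — cleanly enough that the identity $L_{f_i}(\F_{q^{d_i}})=\ker L_{g_i}$ with $g_i=\gcd\!\left(x^\ell-1,\frac{f_i(x)(x^\ell-1)}{x^{d_i}-1}\right)$ is transparent, and in particular verifying that $\frac{f_i(x)(x^\ell-1)}{x^{d_i}-1}$ is genuinely a polynomial (it is, since $d_i\mid\ell$). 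A secondary point to handle carefully is the $b\notin\F_{q^\ell}$ case and, more subtly, the independence of the final answer from the choice of $\ell$: different admissible $\ell$ give the same subspace $\ker L_{H_\ell}$ and the same $\deg H_\ell$, which should be checked or at least remarked upon. Once the ideal-theoretic reformulation is in place, the rest is the elementary fact that a sum of principal ideals in a principal ideal ring is generated by the gcd of the generators.
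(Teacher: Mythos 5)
Your proposal is correct and follows essentially the same route as the paper: identify $\F_{q^\ell}$ with the cyclic module $\F_q[x]/(x^\ell-1)$ via a normal element, recognize the image of the summation map as the ideal generated by $G_\ell=\gcd\bigl(x^\ell-1,\,f_1(x)\tfrac{x^\ell-1}{x^{d_1}-1},\dots\bigr)$, match that ideal with $\ker L_{H_\ell}$ (using separability of $x^{q^\ell}-x$ for the dimension count), and finish with rank–nullity. The only cosmetic difference is that the paper packages the step ``$b$ lies in the ideal generated by $G_\ell$ iff $L_{H_\ell}(b)=0$'' as a separate auxiliary theorem via the annihilator polynomial $m_{b,q}$, whereas you argue it directly through the submodule--kernel correspondence.
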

Theorem~\ref{thm:main} entails the following result in sumsets of finite fields.
\begin{corollary}\label{cor:main}
Fix $k>1$ and $d_1, \ldots, d_k$ positive integers. The number of solutions of
$$x_1+\cdots+x_k=0,$$
with the restrictions $x_i\in \F_{q^{d_i}}$ equals $q^{d_1+\cdots+d_k-\lambda(d_1, \ldots, d_k)}$, where
 $$\lambda(d_1, \ldots, d_k):=\sum_{i=1}^kd_i+\sum_{j=2}^{k}(-1)^{j+1}\sum_{1\le i_1<\cdots<i_j\le k}\gcd(d_{i_1}, \ldots, d_{i_j}).$$ 
In particular, the sumset 
$\sum_{i=1}^k\F_{q^{d_i}}:=\{\alpha_1+\cdots+\alpha_k\,|\, \alpha_i\in \F_{q^{d_i}}\}$ contains exactly $q^{\lambda(d_1, \ldots, d_k)}$ elements. 
\end{corollary}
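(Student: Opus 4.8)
The plan is to derive the Corollary as a direct specialization of Theorem~\ref{thm:main}, taking $k$ polynomials all equal to $f_i(x)=1$ (so that $L_{f_i}(x)=x$) and target $b=0$. First I would fix $\ell=\mathrm{lcm}(d_1,\ldots,d_k)$, which is certainly divisible by each $d_i$. With $f_i=1$ the polynomial in the gcd becomes
$$G_{\ell}(x)=\gcd\left(x^{\ell}-1,\ \frac{x^{\ell}-1}{x^{d_1}-1},\ \ldots,\ \frac{x^{\ell}-1}{x^{d_k}-1}\right)=\gcd\left(\frac{x^{\ell}-1}{x^{d_1}-1},\ \ldots,\ \frac{x^{\ell}-1}{x^{d_k}-1}\right),$$
the last equality because each $\frac{x^{\ell}-1}{x^{d_i}-1}$ already divides $x^{\ell}-1$. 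Since $b=0$ trivially satisfies $L_{H_{\ell}}(0)=0$, Theorem~\ref{thm:main} guarantees a solution and gives the count $q^{d_1+\cdots+d_k-\deg H_{\ell}}$ where $H_{\ell}(x)=(x^{\ell}-1)/G_{\ell}(x)$; hence it remains to show $\deg H_{\ell}=\lambda(d_1,\ldots,d_k)$, or equivalently $\deg G_{\ell}=\ell-\lambda(d_1,\ldots,d_k)$.

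The core of the argument is therefore a purely combinatorial computation of $\deg G_{\ell}$. The natural approach is to work with the multiset of roots in $\overline{\F}_q$: since $\ell$ may be divisible by $p=\mathrm{char}(\F_q)$ the polynomial $x^{\ell}-1$ is not squarefree, so I would track multiplicities carefully (writing $\ell=p^a \ell'$ with $p\nmid \ell'$, every $\ell'$-th root of unity appears with multiplicity $p^a$ in $x^{\ell}-1$, and likewise in each $x^{d_i}-1$ after adjusting the $p$-part, so the $p^a$ factors cancel consistently in each quotient). For a primitive $e$-th root of unity $\zeta$ with $e\mid \ell'$, its multiplicity as a root of $\frac{x^{\ell}-1}{x^{d_i}-1}$ is $p^a$ if $e\nmid d_i$ and $0$ if $e\mid d_i$ (up to the uniform $p$-power bookkeeping); taking the gcd over $i$, $\zeta$ is a root of $G_{\ell}$ precisely when $e\mid d_i$ fails for every $i$, i.e. when $e$ divides none of the $d_i$. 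Consequently $\deg G_{\ell}$ counts (with the uniform multiplicity) the $\ell'$-th roots of unity whose order divides no $d_i$, so
$$\ell-\deg G_{\ell}=\#\{\xi:\xi^{\ell}=1,\ \xi^{d_i}=1\text{ for some }i\}=\#\bigcup_{i=1}^k \mu_{d_i},$$
where $\mu_{d}$ denotes the set of roots of $x^d-1$. A direct inclusion–exclusion on this union, using $|\mu_{d_{i_1}}\cap\cdots\cap\mu_{d_{i_j}}|=|\mu_{\gcd(d_{i_1},\ldots,d_{i_j})}|=\gcd(d_{i_1},\ldots,d_{i_j})$ (again with the $p$-power multiplicities cancelling uniformly), yields exactly $\lambda(d_1,\ldots,d_k)$, and hence $\deg H_{\ell}=\lambda(d_1,\ldots,d_k)$ as claimed.

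Finally, the statement about the sumset follows formally: the map $(\alpha_1,\ldots,\alpha_k)\mapsto \alpha_1+\cdots+\alpha_{k-1}-(-\alpha_k)$, or more directly a counting argument, shows that for each $b$ in the sumset the equation $x_1+\cdots+x_k=b$ has the same number $N:=q^{d_1+\cdots+d_k-\lambda}$ of solutions with $x_i\in\F_{q^{d_i}}$ (Theorem~\ref{thm:main} gives this count whenever a solution exists, since $L_{H_\ell}$ is linear and its kernel condition $L_{H_\ell}(b)=0$ is the only constraint, independent of which admissible $b$ we pick), while there are $q^{d_1+\cdots+d_k}$ tuples in total; dividing, the sumset has $q^{d_1+\cdots+d_k}/N=q^{\lambda(d_1,\ldots,d_k)}$ elements. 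The main obstacle I anticipate is the careful bookkeeping of root multiplicities when $p\mid \ell$, ensuring that the $p$-part contributes a common factor that cancels out of every quotient and of the gcd, so that the degree count reduces cleanly to the squarefree inclusion–exclusion over $\bigcup_i \mu_{d_i}$.
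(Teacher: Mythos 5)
Your overall strategy is the paper's: specialize Theorem~\ref{thm:main} to $f_i=1$, $b=0$, $\ell=\mathrm{lcm}(d_1,\ldots,d_k)$, reduce everything to showing $\deg H_\ell=\lambda(d_1,\ldots,d_k)$, and get the sumset cardinality from equal fibre sizes (the paper phrases this via rank--nullity). The gap is in your root-multiplicity computation of $\deg G_\ell$. Write $\ell=p^a\ell'$ and $d_i=p^{a_i}d_i'$ with $p\nmid\ell'$, $p\nmid d_i'$. For a primitive $e$-th root of unity with $e\mid\ell'$, its multiplicity in $\frac{x^\ell-1}{x^{d_i}-1}$ is $p^a$ if $e\nmid d_i'$ but $p^a-p^{a_i}$ if $e\mid d_i'$ --- which is $0$ only when $a_i=a$. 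The $p$-power does \emph{not} ``cancel consistently in each quotient'', because the $d_i$ need not all share the $p$-adic valuation of their lcm. Concretely, take $q=2$, $d_1=2$, $d_2=3$, so $\ell=6$ and $\lambda(2,3)=4$. Then $x^6-1=(x-1)^2(x^2+x+1)^2$, $\frac{x^6-1}{x^2-1}=(x^2+x+1)^2$ and $\frac{x^6-1}{x^3-1}=x^3-1=(x-1)(x^2+x+1)$, so $G_6=x^2+x+1$ has degree $2$ and $\deg H_6=4$ as required. Your rule (``$\zeta$ is a root of $G_\ell$ iff its order divides no $d_i$'') would instead give $\deg G_6=0$, hence $\deg H_6=6$ and the absurd count $q^{2+3-6}=q^{-1}$.

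The argument can be repaired, but the paper's route avoids roots entirely and is cleaner: in the UFD $\F_q[x]$ one has $N/\gcd(N/g_1,\ldots,N/g_k)=\mathrm{lcm}(g_1,\ldots,g_k)$ whenever each $g_i$ divides $N$ (check the valuation at each irreducible: $v(N)-\min_i\bigl(v(N)-v(g_i)\bigr)=\max_i v(g_i)$), so $H_\ell(x)=\mathrm{lcm}(x^{d_1}-1,\ldots,x^{d_k}-1)$; then $\gcd(x^a-1,x^b-1)=x^{\gcd(a,b)}-1$ (valid in every characteristic) together with the max--min inclusion--exclusion identity $\max_i v_i=\sum_{\emptyset\ne S}(-1)^{|S|+1}\min_{i\in S}v_i$, applied to the multiplicity of each irreducible factor, gives $\deg H_\ell=\lambda(d_1,\ldots,d_k)$. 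If you insist on counting roots, you must retain the individual multiplicities $p^{a_i}$, which yields $\ell-\deg G_\ell=\sum_{e\mid\ell'}\phi(e)\max\{p^{a_i}:e\mid d_i'\}=\deg\mathrm{lcm}(x^{d_1}-1,\ldots,x^{d_k}-1)$, and then the same inclusion--exclusion; the naive set-theoretic union $\bigcup_i\{\xi:\xi^{d_i}=1\}$ with a uniform multiplicity does not compute this quantity.
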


In contrast to many sophisticated techniques that are traditionally employed when studying sumsets, our setting allows us to employ a very elementary approach. Our method relies on considering the generating property of {\em normal elements} over finite fields, the linear structure of linearized polynomials $\sum_{j=0}^ma_jx^{q^j}$ and the arithmetic of the polynomial ring $\F_q[x]$. This approach also allows us to obtain results on the existence and number of elements in finite field extensions with prescribed traces over intermediate extensions.

The structure of the paper is given as follows. Section 2 provides some definitions and preliminary results that are used along the way and in Section 3 we prove Theorem~\ref{thm:main}. Finally, in Section 4 we discuss the existence of elements in finite field extensions with several prescribed traces.

\section{Preparation}
Throughout this paper, $q$ is a prime power, $\F_q$ is the finite field with $q$ elements and $\overline{\F}_q$ is the algebraic closure of $\F_q$. We recall useful definitions in the theory of finite fields and provide some technical results that are frequently employed. For a more detailed information on these definitions and results, see Sections 2.3 and 3.4 of~\cite{LN}.

\begin{definition}
\begin{enumerate}
\item For a polynomial $f\in \F_q[x]$ with $f(x)=\sum_{i=0}^ma_ix^i$, the polynomial $L_f(x):=\sum_{i=0}^ma_ix^{q^i}$ is the {\em $q$-associate} of $f(x)$.

\item Fix $n$ a positive integer. An element $\beta\in \F_{q^n}$ is {\em normal} over $\F_q$ if the elements $\beta, \beta^q,\ldots, \beta^{q^{n-1}}$ form a basis for $\F_{q^n}$ as an $\F_q$-vector space.
\end{enumerate}
\end{definition}

The existence of normal elements is known for any extension $\F_{q^n}$ of $\F_q$. The following lemma provides some basic properties of the $q$-associate of polynomials over $\F_q$. Its proof is straightforward so we omit details.

\begin{lemma}\label{lem:q-associate}
For any $f, g\in \F_q[x]$, we have that $L_{f+g}(x)=L_f(x)+L_g(x)$ and $L_{f}(L_g(x))=L_{f\cdot g}(x)$. In particular, if $f$ divides $g$, then $L_f$ divides $L_g$.
\end{lemma}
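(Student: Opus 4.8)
The plan is to establish the three assertions in order, since the additivity and composition identities are logically independent and together yield the divisibility statement. Write $f(x)=\sum_{i=0}^m a_i x^i$ and $g(x)=\sum_{j=0}^n b_j x^j$ with all coefficients in $\F_q$. The additivity $L_{f+g}=L_f+L_g$ is immediate: the coefficient of $x^i$ in $f+g$ is $a_i+b_i$, so by the very definition of the $q$-associate the coefficient of $x^{q^i}$ in $L_{f+g}$ is $a_i+b_i$, which is exactly the coefficient of $x^{q^i}$ in $L_f+L_g$. No characteristic-$p$ input is needed here; it is pure bookkeeping of coefficients.

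Then comes the composition identity $L_f(L_g(x))=L_{f\cdot g}(x)$, which is the crux. I would expand directly: $L_f(L_g(x))=\sum_{i=0}^m a_i\,\bigl(L_g(x)\bigr)^{q^i}$, and the key observation is that raising to the $q^i$-th power is the $i$-fold iterate of the Frobenius endomorphism $u\mapsto u^q$, hence a ring homomorphism of $\overline{\F}_q[x]$. Consequently $\bigl(\sum_j b_j x^{q^j}\bigr)^{q^i}=\sum_j b_j^{q^i}\, x^{q^{i+j}}$, and since each $b_j$ lies in $\F_q$ it is fixed by Frobenius, so $b_j^{q^i}=b_j$. This collapses the expression to $L_f(L_g(x))=\sum_{i,j} a_i b_j\, x^{q^{i+j}}$. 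On the other side, the coefficient of $x^{k}$ in $f\cdot g$ is $\sum_{i+j=k} a_i b_j$, whence $L_{fg}(x)=\sum_{i,j} a_i b_j\, x^{q^{i+j}}$ as well, and the two match term by term. The two facts doing the work are that Frobenius is additive and multiplicative and that $\F_q$-coefficients survive it unchanged; without the latter the identity would fail.

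Finally the ``in particular'' clause. Suppose $f\mid g$, say $g=f\cdot h$ with $h(x)=\sum_j c_j x^j\in\F_q[x]$. Using the composition identity just proved together with the commutativity of polynomial multiplication, I would write $L_g=L_{h\cdot f}=L_h(L_f(x))=\sum_j c_j\,\bigl(L_f(x)\bigr)^{q^j}$. Since $q^j\ge 1$ for every $j\ge 0$, each summand $\bigl(L_f(x)\bigr)^{q^j}$ is a power of $L_f(x)$ with positive exponent and is therefore divisible by $L_f(x)$; hence so is the whole sum $L_g(x)$. I expect no genuine obstacle in this lemma; the only subtlety worth flagging is that one should route the divisibility through $L_g=L_h\circ L_f$, expanding the outer $L_h$ in powers of $L_f(x)$, rather than through $L_g=L_f\circ L_h$, since the latter form does not make the factor $L_f(x)$ manifest.
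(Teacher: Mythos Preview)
Your argument is correct in all three parts; the paper itself omits the proof entirely, stating only that it is straightforward, so your explicit verification via the Frobenius endomorphism and the factorization $L_g=L_h(L_f(x))$ is consistent with (and more detailed than) what the paper provides.
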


We emphasize that Lemma~\ref{lem:q-associate} is frequently used along the way. Let $\mathcal C(n)$ be the set of polynomials over $\F_q$ that are either constant or have degree at most $n-1$. We observe that $\mathcal C(n)$ is an $n$-dimensional $\F_q$-vector space and can be seen as the simplest presentation of the quotient ring $\frac{\F_q[x]}{x^n-1}$; this fact is extensively employed. Normal elements can be used to generate finite fields using the sets $\mathcal C(n)$, as follows.

\begin{lemma}\label{lem:generator}
Fix $n$ a positive integer and $\beta\in \F_{q^n}$ a normal element. Then any $\alpha\in \F_{q^n}$ is written uniquely as $L_f(\beta)$ for some $f\in \C(n)$.
\end{lemma}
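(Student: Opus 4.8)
The plan is to package the assignment $f \mapsto L_f(\beta)$ as an $\F_q$-linear map and prove it is a bijection by a dimension count. Concretely, consider
\[
\Phi\colon \C(n)\longrightarrow \F_{q^n},\qquad \Phi(f)=L_f(\beta).
\]
First I would verify that $\Phi$ is $\F_q$-linear. Additivity is immediate from Lemma~\ref{lem:q-associate}, which gives $L_{f+g}=L_f+L_g$ and hence $\Phi(f+g)=\Phi(f)+\Phi(g)$. Compatibility with scalars $c\in \F_q$ follows because the $q$-associate of $cf$ is obtained by multiplying all coefficients by $c$, so $L_{cf}=cL_f$ and $\Phi(cf)=c\,\Phi(f)$. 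Thus $\Phi$ is a homomorphism of $\F_q$-vector spaces.

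Next I would record that both sides have $\F_q$-dimension $n$: the set $\C(n)$ consists precisely of the polynomials of degree at most $n-1$ together with $0$, so $1,x,\ldots,x^{n-1}$ is an $\F_q$-basis of $\C(n)$; and $\F_{q^n}$ has dimension $n$ over $\F_q$ by construction. Hence it suffices to show that $\Phi$ is surjective (equivalently injective). For surjectivity, observe that the image of $\Phi$ contains $\Phi(x^i)=L_{x^i}(\beta)=\beta^{q^i}$ for $i=0,1,\ldots,n-1$. Since $\beta$ is normal over $\F_q$, the elements $\beta,\beta^q,\ldots,\beta^{q^{n-1}}$ form an $\F_q$-basis of $\F_{q^n}$ and in particular span it; as the image of $\Phi$ is an $\F_q$-subspace containing all of these, it equals $\F_{q^n}$. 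Therefore $\Phi$ is a surjective linear map between $\F_q$-spaces of equal finite dimension, hence a bijection, and the bijectivity is exactly the statement that every $\alpha\in \F_{q^n}$ equals $L_f(\beta)$ for one and only one $f\in\C(n)$.

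There is no genuinely delicate point in this argument: once $\Phi$ is seen to be well defined and $\F_q$-linear, the normality of $\beta$ does all the work and the conclusion is a one-line dimension comparison. The only thing that requires a little care is the bookkeeping that $\C(n)$ really is $n$-dimensional (so that equality of dimensions can be invoked) and that the monomials $x^i$ with $0\le i\le n-1$ map to the conjugates $\beta^{q^i}$; I would state these explicitly so that the reduction to surjectivity is airtight.
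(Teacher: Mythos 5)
Your proof is correct and is essentially the paper's argument written out in full: the paper simply notes that $\{\beta,\beta^q,\ldots,\beta^{q^{n-1}}\}$ is an $\F_q$-basis of $\F_{q^n}$, so that writing $\alpha$ in coordinates with respect to this basis gives exactly the unique $f\in\C(n)$ with $L_f(\beta)=\alpha$. Your detour through linearity, dimension counting, and surjectivity is sound but adds no content beyond that one observation.
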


\begin{proof}
This follows directly by the fact that $\{\beta, \beta^q, \ldots, \beta^{q^{n-1}}\}$ is an $\F_q$-basis for $\F_{q^n}$.
\end{proof}

We observe that, for $\alpha\in \overline{\F}_{q}$, $L_{x^n-1}(\alpha)=0$ if and only if $\alpha^{q^n}-\alpha=0$, i.e., $\alpha\in \F_{q^n}$. In particular, from Lemma~\ref{lem:q-associate}, the set $\mathcal I_{\alpha}:=\{h\in \F_q[x]\,|\, L_h(\alpha)=0\}$ is a non zero ideal of $\F_q[x]$. Since $\F_q[x]$ is a Principal Ideal Domain, $\mathcal I_{\alpha}$ is generated by a monic polynomial, uniquely determined by $\alpha$.

\begin{definition}
For $\alpha\in \overline{\F}_{q}$, let $m_{\alpha, q}\in \F_q[x]$ be the monic polynomial that generates the ideal $\mathcal I_{\alpha}$.
\end{definition}

In particular, from the definition of $m_{\alpha, q}(x)$, $\alpha\in \F_{q^n}$ if and only if $m_{\alpha, q}(x)$ divides $x^n-1$. From this observation and Lemma~\ref{lem:generator}, the following corollary is straightforward.

\begin{corollary}
An element $\beta\in \F_{q^n}$ is normal over $\F_q$ if and only if $m_{\beta, q}(x)=x^n-1$.
\end{corollary}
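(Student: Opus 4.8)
The plan is to translate the normality of $\beta$ into a degree condition on the generator $m_{\beta, q}(x)$ of the ideal $\mathcal I_\beta$, using the dictionary between $\fq$-linear combinations of the conjugates $\beta, \beta^q, \ldots, \beta^{q^{n-1}}$ and the $q$-associates of polynomials in $\C(n)$. To begin, since $\beta \in \fqn$ we have $L_{x^n-1}(\beta) = \beta^{q^n} - \beta = 0$, so $x^n - 1 \in \mathcal I_\beta$ and therefore $m_{\beta, q}(x)$ divides $x^n - 1$; in particular $\deg(m_{\beta, q}) \le n$. The whole point is to show that $\beta$ is normal if and only if this degree equals $n$, for then $m_{\beta, q}(x)$ is a monic divisor of $x^n-1$ of full degree $n$, which forces $m_{\beta, q}(x) = x^n-1$.

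For the translation, I would consider the $\fq$-linear map $\Phi \colon \C(n) \to \fqn$ given by $\Phi(h) = L_h(\beta)$; linearity is immediate from Lemma~\ref{lem:q-associate}. Writing $h(x) = \sum_{i=0}^{n-1} c_i x^i \in \C(n)$, we have $\Phi(h) = \sum_{i=0}^{n-1} c_i \beta^{q^i}$, so the kernel of $\Phi$ consists precisely of the coefficient vectors giving an $\fq$-linear dependence among $\beta, \beta^q, \ldots, \beta^{q^{n-1}}$; equivalently, $\ker \Phi = \mathcal I_\beta \cap \C(n)$. Since $\C(n)$ and $\fqn$ both have dimension $n$ over $\fq$ and there are exactly $n$ conjugates, the conjugates form a basis (that is, $\beta$ is normal) if and only if they are linearly independent, i.e.\ if and only if $\Phi$ is injective, i.e.\ if and only if $\mathcal I_\beta \cap \C(n) = \{0\}$.

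It remains to connect this intersection with $\deg(m_{\beta, q})$. Every nonzero element of $\mathcal I_\beta$ is a polynomial multiple of the generator $m_{\beta, q}(x)$, so the smallest degree of a nonzero member of $\mathcal I_\beta$ equals $\deg(m_{\beta, q})$. Hence $\mathcal I_\beta$ contains a nonzero polynomial of degree at most $n-1$ if and only if $\deg(m_{\beta, q}) \le n-1$; that is, $\mathcal I_\beta \cap \C(n) \ne \{0\}$ if and only if $\deg(m_{\beta, q}) < n$. Combining the two previous paragraphs, $\beta$ is normal if and only if $\deg(m_{\beta, q}) = n$, and as noted this is equivalent to $m_{\beta, q}(x) = x^n-1$. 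Since the content is entirely this elementary translation, I do not expect a genuine obstacle; the only point demanding care is the dimension count identifying linear independence of the $n$ conjugates with bijectivity of $\Phi$, together with the minimal-degree property of the PID generator $m_{\beta, q}(x)$.
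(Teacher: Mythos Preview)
Your proof is correct and is essentially the argument the paper has in mind: the paper simply declares the corollary ``straightforward'' from the divisibility observation $m_{\beta,q}(x)\mid x^n-1$ together with Lemma~\ref{lem:generator}, and your map $\Phi:\C(n)\to\F_{q^n}$, $h\mapsto L_h(\beta)$, is exactly the unpacking of that lemma. The only difference is presentational---you make explicit the kernel computation $\ker\Phi=\mathcal I_\beta\cap\C(n)$ and the minimal-degree property of $m_{\beta,q}$, while the paper leaves these implicit.
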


The following lemma relates the polynomials $m_{\alpha, q}$ and $m_{\beta, q}$, where $\alpha$ is the image of $\beta$ by a linearized polynomial.

\begin{lemma}\label{lem:gcd}
Let $\beta\in \overline{\F}_q$ and fix $g\in \F_q[x]$. If $\alpha=L_g(\beta)$, then
$$m_{\alpha, q}=\frac{m_{\beta, q}}{\gcd(m_{\beta, q}, g)}.$$
\end{lemma}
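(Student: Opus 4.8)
The plan is to prove the identity $m_{\alpha,q} = m_{\beta,q}/\gcd(m_{\beta,q},g)$ by showing the two monic polynomials generate the same ideal $\mathcal I_\alpha$, using Lemma~\ref{lem:q-associate} throughout to translate divisibility statements about polynomials in $\F_q[x]$ into statements about the linearized operators $L_{(-)}$. Write $m = m_{\beta,q}$ and $e = \gcd(m,g)$, so that $m = e\cdot m'$ for some $m' \in \F_q[x]$; the claim is that $m_{\alpha,q} = m'$.

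First I would check that $m'$ annihilates $\alpha$, i.e. $m' \in \mathcal I_\alpha$. Since $e = \gcd(m,g)$, we may write $g = e\cdot g'$ with $\gcd(m', g') = 1$ (after dividing out $e$). Then $L_{m'}(\alpha) = L_{m'}(L_g(\beta)) = L_{m' \cdot g}(\beta) = L_{m' \cdot e \cdot g'}(\beta) = L_{m \cdot g'}(\beta) = L_{g'}(L_m(\beta)) = L_{g'}(0) = 0$, using Lemma~\ref{lem:q-associate} and the fact that $m = m_{\beta,q}$ kills $\beta$. Hence $m_{\alpha,q}$ divides $m'$.

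For the reverse divisibility, suppose $h \in \mathcal I_\alpha$, so $L_h(\alpha) = 0$, that is $L_h(L_g(\beta)) = L_{hg}(\beta) = 0$; therefore $m = m_{\beta,q}$ divides $hg$. Writing $m = e m'$ and $g = e g'$ with $\gcd(m', g') = 1$, the divisibility $e m' \mid h e g'$ gives $m' \mid h g'$, and since $\gcd(m', g') = 1$ we conclude $m' \mid h$. Thus every element of $\mathcal I_\alpha$ is a multiple of $m'$, so $m'$ divides $m_{\alpha,q}$. Combined with the previous paragraph and the fact that both polynomials are monic, this yields $m_{\alpha,q} = m' = m_{\beta,q}/\gcd(m_{\beta,q},g)$.

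I do not anticipate a serious obstacle here: the argument is a routine ideal-theoretic manipulation once one observes that $L_{(-)}$ turns polynomial multiplication into composition (Lemma~\ref{lem:q-associate}), so that $\mathcal I_\alpha$ can be described purely in terms of divisibility in the PID $\F_q[x]$. The only point requiring a little care is the coprimality bookkeeping — passing from $e = \gcd(m,g)$ to the factorizations $m = em'$, $g = eg'$ with $\gcd(m',g') = 1$ — which is standard. One should also note implicitly that $g$ may be replaced by its remainder modulo $m_{\beta,q}$ without changing $\alpha$, so $\gcd(m_{\beta,q}, g)$ is well-defined and the statement makes sense even when $\deg g$ is large.
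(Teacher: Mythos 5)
Your proof is correct and follows essentially the same route as the paper: both arguments show that $m' = m_{\beta,q}/\gcd(m_{\beta,q},g)$ annihilates $\alpha$ via $L_{m'g}(\beta)=L_{g'}(L_{m_{\beta,q}}(\beta))=0$, and both obtain the reverse divisibility from the coprimality $\gcd(m',g')=1$. The only (cosmetic) difference is that you prove the second half directly, showing every element of $\mathcal I_\alpha$ is a multiple of $m'$, whereas the paper argues by contradiction with a hypothetical nontrivial divisor $f$.
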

\begin{proof}
Set $h=\frac{m_{\beta, q}}{\gcd(m_{\beta, q}, g)}$, hence $g\cdot h$ is divisible by $m_{\beta, q}$. Therefore, $L_h(\alpha)=L_{g\cdot h}(\beta)=0$ and so $m_{\alpha, q}$ divides $h$. If it divides strictly, there exists a nontrivial divisor $f$ of $h$ such that $L_{\frac{h}{f}}(\alpha)=0$, hence $L_H(\beta)=0$, where $H=\frac{gh}{f}$. In particular, $m_{\beta, q}$ divides $H$ or, equivalently, $f$ divides $\frac{g}{\gcd(m_{\beta, q}, g)}$. However, from construction, the polynomials $\frac{g}{\gcd(m_{\beta, q}, g)}$ and $h$ are relatively prime and so they cannot have $f$ as a common divisor.

\end{proof}

From Lemmas~\ref{lem:gcd} and~\ref{lem:generator}, we obtain the following corollary.

\begin{corollary}\label{cor:char}
Fix $n$ a positive integer, $m$ a divisor of $n$ and $\beta\in \F_{q^n}$ a normal element. Then the elements of $\F_{q^m}$ are written uniquely as $L_{\frac{x^n-1}{x^m-1}\cdot h(x)}(\beta)$ with $h\in \C(m)$.
\end{corollary}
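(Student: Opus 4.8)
The plan is to combine the unique representation of Lemma~\ref{lem:generator} with the gcd formula of Lemma~\ref{lem:gcd}. Since $\beta$ is normal over $\F_q$, we have $m_{\beta, q}(x)=x^n-1$, so by Lemma~\ref{lem:generator} every $\alpha\in \F_{q^n}$ is written uniquely as $L_g(\beta)$ with $g\in \C(n)$. More generally, because $\mathcal I_{\beta}=(x^n-1)$ and $L$ is additive (Lemma~\ref{lem:q-associate}), for arbitrary $g, g'\in \F_q[x]$ one has $L_g(\beta)=L_{g'}(\beta)$ if and only if $g\equiv g'\pmod{x^n-1}$.

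First I would identify which $g$ produce an element of $\F_{q^m}$. If $\alpha=L_g(\beta)$, then Lemma~\ref{lem:gcd} gives $m_{\alpha, q}=\frac{x^n-1}{\gcd(x^n-1,g)}$, and $\alpha\in \F_{q^m}$ precisely when $m_{\alpha, q}\mid x^m-1$. Since $m\mid n$, the polynomial $x^m-1$ divides $x^n-1$, and the condition $m_{\alpha, q}\mid x^m-1$ rearranges to $\frac{x^n-1}{x^m-1}\mid \gcd(x^n-1,g)$. As $\frac{x^n-1}{x^m-1}$ always divides $x^n-1$, this is equivalent to $\frac{x^n-1}{x^m-1}\mid g$. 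Hence the elements of $\F_{q^m}$ are exactly the values $L_{\frac{x^n-1}{x^m-1}\cdot h}(\beta)$ as $h$ ranges over $\F_q[x]$.

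It remains to pin down the parametrization by $h\in \C(m)$. Write $P(x)=\frac{x^n-1}{x^m-1}$. The map $h\mapsto L_{P\cdot h}(\beta)$ has image exactly $\F_{q^m}$ by the previous step, and $L_{Ph}(\beta)=L_{Ph'}(\beta)$ if and only if $(x^n-1)\mid P\cdot(h-h')$, i.e.\ if and only if $(x^m-1)\mid (h-h')$. Thus the map factors through $\F_q[x]/(x^m-1)$ and is injective on any set of coset representatives; taking the representatives $\C(m)$ yields both existence and uniqueness of the claimed expression. Alternatively one may observe $|\C(m)|=q^m=|\F_{q^m}|$ and deduce bijectivity from injectivity.

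I do not expect a serious obstacle here: the only delicate point is the bookkeeping with divisibility in the PID $\F_q[x]$ — translating $m_{\alpha, q}\mid x^m-1$ into $P(x)\mid g(x)$, and then the congruence $(x^n-1)\mid P\cdot(h-h')\Leftrightarrow(x^m-1)\mid(h-h')$ — but these reduce to elementary manipulations, used together with Lemmas~\ref{lem:q-associate},~\ref{lem:generator} and~\ref{lem:gcd}.
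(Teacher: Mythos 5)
Your proposal is correct and follows essentially the same route as the paper: both use Lemma~\ref{lem:generator} together with Lemma~\ref{lem:gcd} to show that $\alpha\in\F_{q^m}$ exactly when its representing polynomial is divisible by $\frac{x^n-1}{x^m-1}$. The only cosmetic difference is that you derive uniqueness from the congruence $(x^m-1)\mid(h-h')$, while the paper reads it off from the uniqueness of the representative $H\in\C(n)$; the two are interchangeable.
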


\begin{proof}
Fix $\alpha\in \F_{q^m}$. From Lemma~\ref{lem:generator}, there exists a unique $H\in \C(n)$ such that $\alpha=L_{H}(\beta)$. Since $\beta$ is normal, $m_{\beta, q}(x)=x^n-1$. From Lemma~\ref{lem:gcd}, $m_{\alpha, q}(x)=\frac{x^n-1}{\gcd(x^n-1, H(x))}$. But $\alpha\in \F_{q^m}$, hence $m_{\alpha, q}(x)$ divides $x^m-1$, i.e., $\gcd(x^n-1, H(x))$ is divisible by $\frac{x^n-1}{x^m-1}$. The latter implies that $H(x)= \frac{x^n-1}{x^m-1}\cdot h(x)$ for some $h\in \F_q[x]$. Since $H$ is unique and $H\in \C(n)$, we have that $h$ is unique and $h\in \C(m)$.
\end{proof}

The following theorem provides the main auxiliary result of this paper.

\begin{theorem}\label{thm:aux}
Fix $n$ a positive integer, $f\in \F_q[x]$ a divisor of $x^n-1$ and $\alpha\in \overline{\F}_q$. If $\beta\in \F_{q^n}$ is a normal element, then there exists a polynomial $g\in \F_q[x]$ such that $L_{f\cdot g}(\beta)=\alpha$ if and only if $m_{\alpha, q}(x)$ divides $\frac{x^n-1}{f(x)}$ or, equivalently, $L_{\frac{x^n-1}{f(x)}}(\alpha)=0$.
\end{theorem}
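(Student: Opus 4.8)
The plan is to prove both directions by translating the problem into divisibility statements about the polynomials $m_{\alpha,q}$, $f$ and $x^n-1$, using the dictionary between linearized polynomials and their $q$-associates provided by Lemma~\ref{lem:q-associate} and Lemma~\ref{lem:gcd}.

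First I would establish the equivalence of the two phrasings of the condition. Since $f$ divides $x^n-1$, the polynomial $\frac{x^n-1}{f(x)}$ is a genuine element of $\F_q[x]$, and by the definition of $m_{\alpha,q}$ and the ideal $\mathcal I_\alpha$, we have $L_{\frac{x^n-1}{f(x)}}(\alpha)=0$ if and only if $m_{\alpha,q}(x)$ divides $\frac{x^n-1}{f(x)}$. So it suffices to work with the divisibility condition $m_{\alpha,q}(x)\mid\frac{x^n-1}{f(x)}$.

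For the forward direction, suppose $\alpha=L_{f\cdot g}(\beta)$ for some $g\in\F_q[x]$. Applying Lemma~\ref{lem:gcd} with the polynomial $f\cdot g$ in place of $g$ and using that $\beta$ is normal (so $m_{\beta,q}(x)=x^n-1$ by the corollary before Lemma~\ref{lem:gcd}), I get $m_{\alpha,q}(x)=\frac{x^n-1}{\gcd(x^n-1,\,f(x)g(x))}$. Since $f$ divides $x^n-1$, it divides the gcd, hence $m_{\alpha,q}(x)$ divides $\frac{x^n-1}{f(x)}$, which is the desired conclusion.

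For the converse, assume $m_{\alpha,q}(x)$ divides $\frac{x^n-1}{f(x)}$; in particular $m_{\alpha,q}$ divides $x^n-1$, so $\alpha\in\F_{q^n}$, and by Lemma~\ref{lem:generator} we may write $\alpha=L_H(\beta)$ for a unique $H\in\C(n)$. Again by Lemma~\ref{lem:gcd}, $m_{\alpha,q}(x)=\frac{x^n-1}{\gcd(x^n-1,H(x))}$, so the hypothesis $m_{\alpha,q}(x)\mid\frac{x^n-1}{f(x)}$ forces $\frac{x^n-1}{x^n-1}\cdot f(x)$—more precisely $f(x)$—to divide $\gcd(x^n-1,H(x))$, hence $f(x)\mid H(x)$; writing $H=f\cdot g$ gives $\alpha=L_{f\cdot g}(\beta)$. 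The one genuinely delicate point is this last deduction: from $m_{\alpha,q}\mid\frac{x^n-1}{f}$ one needs $f\mid\gcd(x^n-1,H)$, which follows because $\gcd(x^n-1,H)=\frac{x^n-1}{m_{\alpha,q}}$ and $f$ divides the right side precisely when $m_{\alpha,q}$ divides $\frac{x^n-1}{f}$; I expect this bookkeeping with the two complementary divisors of $x^n-1$ to be the main thing to get right, but it is routine once Lemma~\ref{lem:gcd} is in hand.
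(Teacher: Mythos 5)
Your proposal is correct and follows essentially the same route as the paper: the forward direction is a direct application of Lemma~\ref{lem:gcd} with $m_{\beta,q}(x)=x^n-1$, and the converse writes $\alpha=L_H(\beta)$ via Lemma~\ref{lem:generator} and deduces $f\mid\gcd(x^n-1,H)$ from $m_{\alpha,q}=\frac{x^n-1}{\gcd(x^n-1,H)}$. The only difference is that you spell out the divisor bookkeeping that the paper leaves implicit.
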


\begin{proof}
The ``only if part'' follows directly from Lemma~\ref{lem:gcd}. For the ``if'' part, suppose that $m_{\alpha, q}(x)$ divides $\ell(x)=\frac{x^n-1}{f(x)}$ and fix $\beta\in \F_{q^n}$ a normal element. From Lemma~\ref{lem:generator}, there exists $F\in \C(n)$ such that $\alpha=L_F(\beta)$ and, from Lemma~\ref{lem:gcd}, $m_{\alpha, q}(x)=\frac{x^n-1}{\gcd(x^n-1, F(x))}$. Therefore, $f(x)$ divides $\gcd(x^n-1, F(x))$, i.e., $F=fg$ for some $g\in \F_q[x]$. In other words,
$\alpha=L_{f\cdot g}(\beta)$.
\end{proof}

\section{Proof of the main result}
Fix $b\in \overline{\F}_q$, $d_1, \ldots, d_k$ positive integers and $f_1, \ldots, f_k\in\F_q[x]$ non zero polynomials. Let $\ell>0$ be any integer divisible by $d_1, \ldots, d_k$ and fix $\beta\in \F_{q^{\ell}}$ an element that is normal over $\F_q$. In particular $\F_{q^{d_i}}\subseteq \F_{q^{\ell}}$ for any $1\le i\le k$. For each $(z_1, \ldots, z_k)\in \prod_{i=1}^k\F_{q^{d_i}}$, Corollary~\ref{cor:char} entails that there exists a unique $k$-tuple $(h_1, \ldots, h_k)\in\prod_{i=1}^k\C(d_i)$ such that $z_i=L_{h_i(x)\frac{x^{\ell}-1}{x^{d_i}-1}}(\beta)$. In particular, if we set $\varphi_{\beta}: \prod_{i=1}^k\F_{q^{d_i}}\to \F_{q^{\ell}}$ with
$$(z_1, \ldots, z_k)\mapsto L_{f_1}(z_1)+\cdots+L_{f_k}(z_k),$$
$\varphi_{\beta}$ is an $\F_q$-linear map between $\F_q$-vector spaces with image set $\mathcal S_{\beta}=\{L_{h}(\beta)\,|\, h\in \mathcal S\}$, where
$$\mathcal S=\left\{ \sum_{i=1}^k\frac{f_i(x)(x^{\ell}-1)}{x^{d_i}-1}\cdot h_i(x)\,|\, h_i\in \C(d_i)\right\}.$$
Since $L_{x^{\ell}-1}(\beta)=\beta^{q^{\ell}}-\beta=0$, we have that $\mathcal S_{\beta}=\{L_{h}(\beta)\,|\, h\in \mathcal S'\}$, where 
$$\mathcal S'=\left\{ h(x)\cdot (x^{\ell}-1)+\sum_{i=1}^k\frac{f_i(x)(x^{\ell}-1)}{x^{d_i}-1}\cdot h_i(x)\,|\,h,  h_i\in \F_q[x]\right\},$$
is the ideal of $\F_q[x]$ generated by the polynomials $\frac{f_i(x)(x^{\ell}-1)}{x^{d_i}-1}$ and the polynomial $x^{\ell}-1$. Since $\F_q[x]$ is an Euclidean Domain, the ideal $\mathcal S'$ is generated by the polynomial 
$$G_{\ell}(x)=\gcd\left(x^{\ell}-1, \frac{f_1(x)(x^{\ell}-1)}{x^{d_1}-1}, \ldots, \frac{f_k(x)(x^{\ell}-1)}{x^{d_k}-1}\right).$$
In particular, $\mathcal S_{\beta}=\{L_{h}(\beta)\,|\, h\in \mathcal S^*\}$, where $$\mathcal S^*=\{g(x)\cdot G_{\ell}(x)\,|\, g\in \C(\ell-\deg(G_{\ell(x)}))\}.$$
Therefore, the equation 
$$L_{f_1}(x_1)+\cdots+L_{f_k}(x_k)=b,$$
has a solution $(z_1, \ldots, z_k)\in \overline{\F}_q^k$ with the restrictions $z_i\in \F_{q^{d_i}}$ if and only if $b$ is of the form $L_{g\cdot G_{\ell}}(\beta)$ for some $g\in\F_{q}[x]$. From Theorem~\ref{thm:aux}, the later holds if and only if $L_{H_{\ell}}(b)=0$, where
$H_{\ell}(x)=\frac{x^{\ell}-1}{G_{\ell}(x)}$. 

For the number of solutions, we observe that such number coincides with the number of solutions for $b=0$. In this case, we are just looking at the kernel of $\varphi_{\beta}$, which is an $\F_{q}$-vector space so it suffices to compute its dimension.
We have seen that the image set of $\varphi_{\beta}$ comprises the roots of $L_{H_{\ell}}(y)=0$ that lie in $\F_{q^{\ell}}$. Since $H_{\ell}$ divides $x^{\ell}-1$, Lemma~\ref{lem:q-associate} entails that $L_{H_{\ell}}(x)$ divides $x^{q^{\ell}}-x$, a separable polynomial. Therefore, all the $q^{\deg(H_\ell(x))}$ roots of $L_{H_{\ell}}(y)=0$ are distinct and lie in $\F_{q^{\ell}}$. In particular, the image set of $\varphi_{\beta}$ is an $\F_q$-vector space of dimension $\deg(H_{\ell}(x))$. The Rank-Nullity Theorem entails that the kernel of $\varphi_{\beta}$ is an $\F_q$-vector space with dimension
$\sum_{i=1}^{k}d_i-\deg(H_{\ell}(x))$,
hence it has cardinality $q^{d_1+\cdots+d_k-\deg(H_{\ell}(x))}$.

\subsection{Proof of Corollary~\ref{cor:main}}
Let $\ell$ be the least common multiple of the numbers $d_1, \ldots, d_k$. We observe that Theorem~\ref{thm:main} applies to  Corollary~\ref{cor:main} by setting $f_i(x)=1\in \F_q$. In particular, the number of solutions of
\begin{equation}\label{eq:zero-sum}x_1+\cdots+x_k=0,\end{equation}
with the restrictions $x_i\in \F_{q^{d_i}}$ equals
$q^{d_1+\cdots+d_k-\deg(H_{\ell}(x))}$,
where $$H_{\ell}(x)=(x^{\ell}-1)\cdot \gcd\left(x^{\ell}-1, \frac{(x^{\ell}-1)}{x^{d_1}-1}, \ldots, \frac{(x^{\ell}-1)}{x^{d_k}-1}\right)^{-1}=\mathrm{lcm}(x^{d_1}-1, \ldots, x^{d_k}-1).$$

We observe that $\gcd(x^{a}-1, x^{b}-1)=x^{\gcd(a, b)}-1$ and $\mathrm{lcm}(x^{a}-1, x^b-1)=\frac{(x^a-1)(x^b-1)}{\gcd(x^a-1, x^b-1)}$ for any positive integers $a, b$. By a simple inclusion-exclusion argument, the latter implies that $\mathrm{lcm}(x^{d_1}-1, \ldots, x^{d_k}-1)$ is a polynomial of degree 
$$\lambda(d_1, \ldots, d_k)=\sum_{i=1}^kd_i+\sum_{j=2}^{k}(-1)^{j+1}\sum_{1\le i_1<\cdots<i_j\le k}\gcd(d_{i_1}, \ldots, d_{i_j}),$$
and so the number of solutions of Eq.~\eqref{eq:zero-sum} equals $q^{d_1+\cdots+d_k-\lambda(d_1, \ldots, d_k)}$. We proceed to the formula regarding the cardinality of the sumset $$\mathfrak S=\sum_{i=1}^k\F_{q^{d_i}}:=\{\alpha_1+\cdots+\alpha_k\,|\, \alpha_i\in \F_{q^{d_i}}\}.$$ This follows directly from the previous enumeration formula and the Rank-Nullity Theorem for the $\F_q$-linear map
$\varphi:\prod_{i=1}^k\F_{q^i}\to \mathfrak S$ given by $$(\alpha_1, \ldots, \alpha_k)\mapsto \sum_{i=1}^k\alpha_i.$$

\section{On elements with several prescribed traces}
In this section, we consider special  systems of linear equations over finite fields involving trace maps. We recall that, for a positive integer $n>1$ and a divisor $m<n$ of $n$, the trace map $\mathrm{Tr}_{n/m}:\F_{q^n}\to \F_{q^m}$ is given by $$\alpha\mapsto \sum_{j=0}^{n/m-1}\alpha^{q^{mi}}=L_{\frac{x^n-1}{x^m-1}}(\alpha).$$


In~\cite{count} the authors provide the number of elements in $\F_{q^n}$, not contained in any intermediate extension $\F_{q^d}$, with prescribed trace $a\in \F_q^*$. Given a $k$-tuple $(d_1, \ldots, d_k)$ of distinct divisors of $n$ and $(\beta_1, \ldots, \beta_k)\in \prod_{i=1}^k\F_{q^{k_i}}$, we may ask if there is any element $\alpha\in \F_{q^n}$ such that $\mathrm{Tr}_{n/d_i}(\alpha)=\beta_i$. In other words, we are looking at solutions to the following system of equations:
$$L_{\frac{x^n-1}{x^{d_i}-1}}(x)=\beta_i, 1\le i\le k.$$ 
It is direct to verify that the trace map is transitive, i.e., $\mathrm{Tr}_{n/l}(x)=\mathrm{Tr}_{m/l}(\mathrm{Tr}_{n/m}(x))$ whenever $m$ divides $n$ and $l$ divides $m$. In particular, if the system of equations has a solution $z\in \F_{q^n}$, then \begin{equation}\label{eq:trace-pair}\Tr_{d_i/\gcd(d_i, d_j)}(\beta_i)=\Tr_{d_j/\gcd(d_i, d_j)}(\beta_j), 1\le i, j\le k.\end{equation} 
Therefore, a necessary condition for the system to have a solution is that Eq.~\eqref{eq:trace-pair} holds. In the following theorem, we show that such condition is also sufficient and we obtain the exactly number of solutions.
\begin{theorem}\label{thm:trace}
Let $n>1$ be a positive integer and let $d_1, \ldots, d_k$ be distinct divisors of $n$. Fix $(\beta_1, \ldots, \beta_k)\in \prod_{i=1}^k\F_{q^{k_i}}$ and consider the following system of equations:
\begin{equation}\label{eq:trace-main}\Tr_{n/d_i}(x)=\beta_i,\; 1\le i\le k,
\end{equation}
where $\Tr_{d_i/\gcd(d_i, d_j)}(\beta_i)=\Tr_{d_j/\gcd(d_i, d_j)}(\beta_j)$ for any $1\le i<j\le k$.
Then Eq.~\eqref{eq:trace-main} has exactly $q^{n-\lambda(d_1, \ldots, d_k)}$ solutions, where 
$$\lambda(d_1, \ldots, d_k)=\sum_{i=1}^kd_i+\sum_{j=2}^{k}(-1)^{j+1}\sum_{1\le i_1<\cdots<i_j\le k}\gcd(d_{i_1}, \ldots, d_{i_j}).$$
\end{theorem}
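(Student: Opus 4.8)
The plan is to transport the whole system to the ring $\F_q[x]/(x^n-1)$ by means of a normal element, much as in the proof of Theorem~\ref{thm:main}, so that it becomes a system of simultaneous congruences amenable to a Chinese‑Remainder‑type argument. Since $\Tr_{n/d}=L_{(x^n-1)/(x^d-1)}$, set $g_i(x)=\tfrac{x^n-1}{x^{d_i}-1}$, so that Eq.~\eqref{eq:trace-main} reads $L_{g_i}(x)=\beta_i$, $1\le i\le k$. Fix $\beta\in\F_{q^n}$ normal over $\F_q$; then $m_{\beta,q}(x)=x^n-1$, hence $L_h(\beta)=0$ if and only if $(x^n-1)\mid h$. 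By Lemma~\ref{lem:generator}, every $x\in\F_{q^n}$ equals $L_F(\beta)$ for a unique $F\in\C(n)$, and by Corollary~\ref{cor:char} each $\beta_i\in\F_{q^{d_i}}$ equals $L_{g_ic_i}(\beta)$ for a unique $c_i\in\C(d_i)$. Using Lemma~\ref{lem:q-associate}, the equation $L_{g_i}(L_F(\beta))=L_{g_ic_i}(\beta)$ is equivalent to $L_{g_i(F-c_i)}(\beta)=0$, i.e.\ to $(x^n-1)\mid g_i(F-c_i)=\tfrac{(x^n-1)(F-c_i)}{x^{d_i}-1}$, i.e.\ to $(x^{d_i}-1)\mid F-c_i$. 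So $x=L_F(\beta)$ solves Eq.~\eqref{eq:trace-main} precisely when $F\equiv c_i\pmod{x^{d_i}-1}$ for all $1\le i\le k$.

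Next I would match the compatibility hypothesis with the consistency condition of this congruence system. Writing $D=\gcd(d_i,d_j)$ and using the identity $\tfrac{x^{d_i}-1}{x^{D}-1}\,g_i=\tfrac{x^n-1}{x^{D}-1}$ together with Lemma~\ref{lem:q-associate}, one gets $\Tr_{d_i/D}(\beta_i)=L_{\frac{x^n-1}{x^{D}-1}\,c_i}(\beta)$ and likewise $\Tr_{d_j/D}(\beta_j)=L_{\frac{x^n-1}{x^{D}-1}\,c_j}(\beta)$. Hence $\Tr_{d_i/D}(\beta_i)=\Tr_{d_j/D}(\beta_j)$ is equivalent to $L_{\frac{x^n-1}{x^{D}-1}(c_i-c_j)}(\beta)=0$, that is, to $(x^{D}-1)\mid c_i-c_j$. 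Since $\gcd(x^{d_i}-1,x^{d_j}-1)=x^{\gcd(d_i,d_j)}-1$, the family of assumptions \eqref{eq:trace-pair} is exactly the statement that $c_i\equiv c_j\pmod{\gcd(x^{d_i}-1,x^{d_j}-1)}$ for all $i<j$.

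I would then invoke the generalized Chinese Remainder Theorem in the principal ideal domain $\F_q[x]$: the system $F\equiv c_i\pmod{m_i}$, $1\le i\le k$, is solvable if and only if $c_i\equiv c_j\pmod{\gcd(m_i,m_j)}$ for all $i,j$, in which case its solution set is a single residue class modulo $\mathrm{lcm}(m_1,\dots,m_k)$. With $m_i=x^{d_i}-1$, solvability holds by the previous paragraph, and since $\mathrm{lcm}(x^{d_1}-1,\dots,x^{d_k}-1)$ divides $x^n-1$, the set of $F\in\C(n)$ satisfying all the congruences is a coset of $\{G\in\C(n):\mathrm{lcm}(x^{d_1}-1,\dots,x^{d_k}-1)\mid G\}$, so it has $q^{\,n-\deg\mathrm{lcm}(x^{d_1}-1,\dots,x^{d_k}-1)}$ elements. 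By the inclusion–exclusion computation carried out in the proof of Corollary~\ref{cor:main}, $\deg\mathrm{lcm}(x^{d_1}-1,\dots,x^{d_k}-1)=\lambda(d_1,\dots,d_k)$; and since $F\mapsto L_F(\beta)$ is a bijection $\C(n)\to\F_{q^n}$, the number of solutions of Eq.~\eqref{eq:trace-main} is $q^{\,n-\lambda(d_1,\dots,d_k)}$.

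The step I expect to be the main obstacle is the generalized CRT invoked above: for pairwise non‑coprime moduli this is not the textbook statement and needs a short justification or a reference. I would argue by induction on $k$. The case $k=2$ is the classical fact that $F\equiv c_1\pmod{m_1}$, $F\equiv c_2\pmod{m_2}$ is solvable exactly when $\gcd(m_1,m_2)\mid c_1-c_2$, uniquely modulo $\mathrm{lcm}(m_1,m_2)$. For the inductive step, one first solves the first $k-1$ congruences, obtaining $F\equiv c\pmod{\mathrm{lcm}(m_1,\dots,m_{k-1})}$, and then combines this with $F\equiv c_k\pmod{m_k}$; the needed compatibility $c\equiv c_k\pmod{\gcd(\mathrm{lcm}(m_1,\dots,m_{k-1}),m_k)}$ follows from the pairwise hypotheses $c_i\equiv c_k\pmod{\gcd(m_i,m_k)}$ for $i<k$ together with the distributivity identity $\gcd(\mathrm{lcm}(m_1,\dots,m_{k-1}),m_k)=\mathrm{lcm}(\gcd(m_1,m_k),\dots,\gcd(m_{k-1},m_k))$, which holds in the GCD domain $\F_q[x]$.
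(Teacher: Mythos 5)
Your proposal is correct and follows essentially the same route as the paper: transport the system via a normal element to the congruences $F\equiv h_i\pmod{x^{d_i}-1}$ in $\C(n)$, translate the trace-compatibility hypothesis into the pairwise congruences $h_i\equiv h_j\pmod{x^{\gcd(d_i,d_j)}-1}$, and apply the Chinese Remainder Theorem for non-coprime moduli to count a coset of size $q^{n-\deg\mathrm{lcm}(x^{d_1}-1,\ldots,x^{d_k}-1)}$. Your explicit inductive justification of the generalized CRT, via the distributivity of $\gcd$ over $\mathrm{lcm}$ in $\F_q[x]$, is a welcome addition, since the paper invokes that step without further comment.
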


\begin{proof}Fix $\beta\in \F_{q^n}$ a normal element. Since $\beta_i\in \F_{q^{d_i}}$, Corollary~\ref{cor:char} entails that there exists unique $h_i\in \C(d_i)$ such that $\beta_i=L_{\frac{x^n-1}{x^{d_i}-1}\cdot h_i(x)}(\beta)$. Moreover, an arbitrary element of $\F_{q^n}$ is of the form $L_F(\beta)$ with $F \in \C(n)$. In particular, since $L_T(\beta)=0$ if and only if $T(x)\equiv 0\pmod {x^n-1}$, Eq.~\eqref{eq:trace-main} is equivalent to the following system of congruences in $\F_{q}[x]$:
\begin{equation*}\frac{x^n-1}{x^{d_i}-1}\cdot F(x)\equiv \frac{x^n-1}{x^{d_i}-1}\cdot h_i(x)\pmod {x^n-1},\; 1\le i\le k.
\end{equation*}
The previous system is equivalent to 
\begin{equation}\label{eq:trace-main-revised}
F(x)\equiv h_i(x)\pmod {x^{d_i}-1},\; 1\le i\le k.
\end{equation}
From hypothesis, $\Tr_{d_i/\gcd(d_i, d_j)}(\beta_i)=\Tr_{d_i/\gcd(d_i, d_j)}(\beta_j)$ for any $1\le i<j\le k$. The latter can be rewritten as
$$h_i(x)\equiv h_j(x)\pmod {x^{\gcd(d_i, d_j)}-1}, \; 1\le i<j\le k.$$
From the previous equalities, the Chinese Remainder Theorem entails that Eq.~\eqref{eq:trace-main-revised} has exactly one solution $F_0\in \C(\deg(L))$, where $L(x):=\mathrm{lcm}(x^{d_1}-1, \ldots, x^{d_k}-1)$. Moreover, any other solution is of the form $F_0(x)+L(x)\cdot M(x)$ with $M\in \F_q[x]$. We have seen that $L(x)$ has degree $\lambda(d_1, \ldots, d_k)$. We recall that the only restriction on Eq.~\eqref{eq:trace-main-revised} is $F\in \C(n)$, i.e., $M\in \C(n-\deg(L))$. The latter implies that we have exactly $q^{n-\lambda(d_1, \ldots, d_k)}$ distinct solutions for Eq.~\eqref{eq:trace-main}.

\end{proof}

\section*{Acknowledgements}
We thank the anonymous reviewers for helpful comments and suggestions. The author was supported by FAPESP 2018/03038-2, Brazil.


\end{document}